\newtheorem{theorem}{Theorem} 
\newtheorem{corollary}[theorem]{Corollary}
\newdefinition{rmk}{Remark} 
\newproof{pf}{Proof}
\newproof{pot}{Proof of Theorem \ref{thm2}}
\def\vec#1{\mbox{\boldmath{$#1$}}}
\begin{document}

\begin{frontmatter}



\title{Generalized heterogeneous hypergeometric functions and \\ the distribution of the largest eigenvalue of an elliptical Wishart matrix}


\author[a]{Aya Shinozaki}
\author[b]{Koki Shimizu}
\author[b]{Hiroki Hashiguchi}

\cortext[mycorrespondingauthor]{Corresponding author. Email address: \url{1420702@ed.tus.ac.jp}~(K. Shimizu).}

\affiliation[a]{organization={Chuo University},
            addressline={1-13-27 Kasuga}, 
            city={Bunkyo-ku},
            postcode={112-8551}, 
            state={Tokyo},
            country={Japan}}
\affiliation[b]{organization={Tokyo University of Science},
            addressline={1-3 Kagurazaka}, 
            city={Shinjuku-ku},
            postcode={162-8601}, 
            state={Tokyo},
            country={Japan}}

\begin{abstract}
In this study, we derive the exact distributions of eigenvalues of a singular Wishart matrix under an elliptical model. 
We define generalized heterogeneous hypergeometric functions with two matrix arguments and provide convergence conditions for these functions.
The joint density of eigenvalues and the distribution function of the largest eigenvalue for a singular elliptical Wishart matrix are represented by these functions. Numerical computations for the distribution of the largest eigenvalue were conducted under the matrix-variate $t$ and Kotz-type models. 
\end{abstract}

\end{frontmatter}


\section{Introduction}
The distribution theory of eigenvalues of a Wishart matrix has been studied under the assumption of normality.
Under this assumption, the hypergeometric functions of matrix arguments introduced by Constantine~\cite{C1963} were used to express many distributions of eigenvalues of central or noncentral Wishart matrices. 
The exact distributions of the largest and smallest eigenvalues of a Wishart matrix were derived by Sugiyama~\cite{S1967} and Khatri~\cite{K1972}, respectively. 
An elliptically contoured distribution, which is a more general assumption than normality, has also been well studied (Fang and Zhang~\cite{FZ1990} and Fang et al.~\cite{F1990}, among others.
Matrix-variate elliptically contoured distributions include a matrix-variate normal, Pearson type VII, Kotz type, Bessel, and Jensen-logistic distributions. 
The generalized hypergeometric functions that are useful for the derivation of distributions of eigenvalues under the elliptical model was defined by D{\'i}az-Garc{\'i}a and Caro-Lopera~\cite{GarciaCaro2008}.
Caro-Lopera et al.~\cite{C2014} derived the density of an elliptical Wishart matrix and provided the exact distribution for testing the equality of covariance matrices.
Furthermore, Caro-Lopera et al.~\cite{C2016} provided the exact distributions of the extreme eigenvalues of an elliptical Wishart matrix.
These results of eigenvalue distributions cover the classical results under the Gaussian model as a special case. 
Shinozaki et al.~\cite{S2018} provided the alternative approach for the derivation of the exact distribution of the largest eigenvalue and conducted numerical experiments under the matrix variable $t$ model.
The largest and smallest eigenvalues of a ratio of two elliptical Wishart matrices were also given by Shinozaki and Hashiguchi~\cite{SH2018} 

In the case of a singular Wishart matrix, Uhlig~\cite{U1994} provided useful Jacobians for the transformation of singular matrices and its density.
Its joint density of eigenvalues was given by Srivastava~\cite{S2003} with integrals over the Steifel manifold.
In the shape theory, D\'{i}az-Garc\'{i}a and Caro-Lopera\cite{GarciaCaro2013} provided the shape density relating the eigenvalues distribution of a singular Wishart matrix. 
Shimizu and Hashiguchi~\cite{S2021a} defined heterogeneous hypergeometric functions with two matrix arguments that are useful for deriving the distributions of eigenvalues of a singular random matrix.
The exact distributions of the largest eigenvalue of a singular Wishart and $F$ matrices were given by Shimizu and Hashiguchi~\cite{S2021a,S2021b}.

In this study, we show that the exact distributions of eigenvalues of a singular elliptical Wishart matrix are expressed in terms of generalized heterogeneous hypergeometric functions. 
In Section \ref{hhgf}, we introduce the matrix-variate elliptically contoured distribution and define the generalized heterogeneous hypergeometric functions. 
Furthermore, we provide the convergence condition for the generalized heterogeneous hypergeometric functions.
The exact distribution of the largest eigenvalue of a singular elliptical Wishart matrix is presented in Section \ref{exact}.
Our derivation is based on the method of Sugiyama~\cite{S1967}. 
In Section \ref{numerical}, we compute the distribution of the largest eigenvalue under the matrix-variate $t$ and Kotz-type models.  

\section{Generalized heterogeneous hypergeometric function ${}_rP_s^{(m, n)}$}\label{hhgf}

 An $m\times n$ random matrix $X$ is said to have a matrix-variate elliptically contoured distribution 
 $\mathcal{E}_{m\times n}(M, \Sigma\otimes\Omega; h)$, if its density function is given as 
\begin{align}
g_{\tiny X}(X)=
\frac{1}{|\Sigma|^{n/2}|\Omega|^{m/2}}
h({\rm tr} \Sigma^{-1}(X-M)\Omega^{-1}(X-M)^\top), \label{eq:ellip}
\end{align}
where $M$ is the $m\times n$ mean matrix, $\Sigma$ is $m\times m$, $\Omega$ is $n\times n$, $\Sigma> 0$, and $\Omega> 0$,   
and the generator function $h$: $\mathbb{R}\to [0, \infty)$, satisfies $h(u)\in C^\infty$ with uniform convergence in $\mathbb{R}$.
If $X \sim \mathcal{E}_{m\times n}(\mathbf{0}, \Sigma\otimes I_n, h)$, 
where $M=\mathbf{0}$ and $\Omega = I_n$ in \eqref{eq:ellip},
then we call $W=XX^\top$ the elliptical Wishart matrix and write it as $W \sim \mathcal{EW}_m(n, \Sigma; h)$.
If $n<m$, then the Wishart matrix $W$ is called singular; otherwise, it is non-singular.
The singular elliptical Wishart matrix $W$ has $n$ positive eigenvalues and $ m-n $ zero eigenvalues. 
Using these positive eigenvalues, say, $\ell_1, \dots, \ell_n$, 
it has the spectral decomposition as $W = H_1 L_1 H_1^\top$, 
where $L_1 = \mathrm{diag}(\ell_1, \dots, \ell_n)$, $\ell_1>\cdots>\ell_n>0$
 and the $m \times n$ matrix $H_1$ is satisfied by $H_1^\top H_1=I_n$.
The set of all such $m\times n$ matrices $H_1$ with orthonormal columns is called the Stiefel manifold $V_{n, m}$, defined by
\begin{align*}
V_{n, m}=\{H_1 \in \mathbb{R}^{m \times n} \mid H_1^\top  H_1=I_n \},
\end{align*}
where $n \le m$. 
D{\' i}az-Garc{\' i}a and Guti\'errez-J\'aimez~\cite{G2006} gave the density function of a singular elliptical Wishart matrix $W$  as 
\begin{align}
\frac{\pi^{n^2/2}}{|\Sigma|^{n/2}\Gamma_n(n/2)}|L_1|^{(n-m-1)/2}h(\mathrm{tr}\Sigma^{-1}W),
\label{eq:densityW}
\end{align}
where the multivariate gamma function is
\begin{align*}
\Gamma_m(c)=\pi^{m(m-1)/4}\prod_{i=1}^{m}\Gamma \biggl(c-\frac{i-1}{2}\biggl), \ \ \mathrm{Re}(c)>m-1. 
\end{align*}
Because $h(x) \in \mathbb{C}^\infty$, the Maclaurin expansion of $h$ is expressed as
\begin{align}
 h(x) = \sum_{k=0}^\infty \dfrac{h^{(k)}(0)}{k !} x^k. \label{eqn:Mc-h}
\end{align} 
Furthermore, for an $m \times m$ symmetric matrix $X$, the function $h(\mathrm{tr} X)$ can be expanded  
by zonal polynomials $\mathcal{C}_\kappa(X)$ associated with a partition $\kappa$ of $k$.
For a positive integer $k$, let $\kappa=(\kappa_1, \dots, \kappa_m)$ denote a partition of $k$ with 
$\kappa_1\ge\cdots\ge\kappa_m\ge 0$ and $\kappa_1+\cdots+\kappa_m=k$. 
The set of all partitions with lengths not longer than $m$ is denoted by 
$P_m^k=\{\kappa=(\kappa_1, \dots, \kappa_m) \mid 
\kappa_1+\cdots+\kappa_m=k, \kappa_1\ge\kappa_2\ge\cdots\ge\kappa_m\ge 0\}$. 
The Pochammer symbol for a partition $\kappa$ is defined as 
$(\alpha)_\kappa=\prod_{i=1}^n\{\alpha-(i-1)/2\}_{\kappa_i}$, 
where $(\alpha)_k=\alpha(\alpha+1)\cdots(\alpha+k-1)$ and $(\alpha)_0=1$. 
For the $m\times m$ symmetric matrix $X$ with eigenvalues $x_1, \dots, x_m$,
the zonal polynomial $\mathcal{C}_\kappa(X)$ is defined as a symmetric polynomial in $x_1, \dots, x_m$. 
See p.227 of Muirhead~\cite{R1982} for details.
Shimizu and Hashiguchi~\cite{S2021a} showed
\begin{align} 
\int_{V_{n, m}} \mathcal{C}_\kappa(X H_1 Y H_1^\top) (d H_1) = 
\frac{\mathcal{C}_\kappa(X) \mathcal{C}_\kappa(Y)}{\mathcal{C}_\kappa(I_m)}
\label{eqn:int-C_HXH}
\end{align}
for an $m \times m$ symmetric matrix $X$, and an $n \times n$ symmetric matrix $Y$, 
where $(d H_1)$ is the differential form of $V_{n, m}$, such that
$$
\int_{V_{n, m}} (d H_1) = 1, \quad 
(d H_1) = \dfrac{1}{\mathrm{Vol}(V_{n, m}) }(H_1^\top dH_1), \quad (H_1^\top dH_1)=\bigwedge_{j=i+1}^m \bigwedge_{i=1}^n \mathbf{h}_j^\top d\mathbf{h}_i, 
$$ 
\begin{align}
\mathrm{Vol}(V_{n, m}) =\int_{V_{n,m} }(H_1^\top dH_1)= \dfrac{2^n\pi^{mn/2}}{\Gamma_n(m/2)}\label{vol_Steifel}
\end{align}
and $(H_1 \mid H_2) = (\mathbf{h}_1,  \dots, \mathbf{h}_n \mid  \mathbf{h}_{n+1}, \dots, \mathbf{h}_{m}) \in O(m)$.

From \eqref{eqn:Mc-h} and the property of zonal polynomials, we can define $_{0}P_{0}(h^{(k)}(0): X)$ as
\begin{align}
_{0}P_{0}(h^{(k)}(0): X) =  \sum_{k=0}^\infty \dfrac{h^{(k)}(0)}{k !} (\mathrm{tr} X)^k =  \sum_{k=0}^\infty \dfrac{h^{(k)}(0)}{k !} 
\sum_{\kappa \in P_m^k} \mathcal{C}_\kappa(X),
\label{eqn:0P0-series}
\end{align}
which is an infinite series expression of $h(\mathrm{tr} X)$.
If $h(x)=\exp(x)$, then we have $_{0}P_{0}(1: X) = \exp(\mathrm{tr} X) = {}_{0} F_{0}(X)$, where $ {}_{0} F_{0}(X)$ is the 
hypergeometric function with a matrix argument of type $(0,0)$.
We also define
\begin{align}
 {}_{0} P_{0}^{(m,n)}(h^{(k)}(0): X, Y) = \int_{V_{n, m}} {}_{0}P_{0}(h^{(k)}(0): X H_1 Y H_1^\top) (d H_1). 
 \label{eqn:def-0P0} 
\end{align}
for an $m \times m$ symmetric matrix $X$ and an $n \times n$ symmetric matrix $Y$.
Then, the function ${}_{0} P_{0}^{(m,n)}(h^{k}(0): X, Y)$ can be expanded in terms of zonal polynomials according to the following theorem:

\begin{theorem} \label{thm:0P0-C} Let $h(x) \in \mathbb{C}^\infty$ with uniform convergence in $\mathbb{R}$. 
For an $m \times m$ symmetric matrix $X$ and an $n \times n$ symmetric matrix $Y$, where $m \ge n$, 
the function $_{0}P_{0}^{(m, n)}(h^{(k)}(0): X, Y)$ defined in \eqref{eqn:def-0P0} is an infinite series of zonal polynomials as  
\begin{align*}
 {}_{0} P_{0}^{(m,n)}(h^{(k)}(0): X, Y) = \sum_{k=0} \dfrac{h^{(k)}(0)}{k !} \sum_{\kappa \in P_n^k}\dfrac{ \mathcal{C}_\kappa(X) \; \mathcal{C}_\kappa(Y)}
{\mathcal{C}_\kappa(I_m)}.
\end{align*}
\end{theorem}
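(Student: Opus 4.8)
The plan is to substitute the zonal-polynomial series \eqref{eqn:0P0-series}, applied to the $m \times m$ argument $X H_1 Y H_1^\top$, directly into the definition \eqref{eqn:def-0P0}, and then integrate term by term over the Stiefel manifold $V_{n, m}$ using the key identity \eqref{eqn:int-C_HXH}. Writing out the definition gives
\begin{align*}
{}_{0} P_{0}^{(m,n)}(h^{(k)}(0): X, Y)
= \int_{V_{n, m}} \sum_{k=0}^\infty \dfrac{h^{(k)}(0)}{k !} \sum_{\kappa \in P_m^k} \mathcal{C}_\kappa(X H_1 Y H_1^\top) \, (d H_1),
\end{align*}
where each zonal polynomial is evaluated at the $m \times m$ symmetric matrix $X H_1 Y H_1^\top$, so the inner sum ranges a priori over all partitions $\kappa \in P_m^k$ of length at most $m$.

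First I would justify interchanging the integral with the double summation. This is where the hypothesis $h \in \mathbb{C}^\infty$ with uniform convergence in $\mathbb{R}$ enters: since $V_{n, m}$ is compact, the continuous map $H_1 \mapsto \mathrm{tr}(X H_1 Y H_1^\top)$ has bounded image, and on that bounded set the Maclaurin series of $h$ converges uniformly, which transfers to uniform convergence of the reorganized zonal-polynomial series over $V_{n, m}$. Term-by-term integration is therefore legitimate. After exchanging, I would apply \eqref{eqn:int-C_HXH} to each term,
\begin{align*}
\int_{V_{n, m}} \mathcal{C}_\kappa(X H_1 Y H_1^\top) \, (d H_1) = \dfrac{\mathcal{C}_\kappa(X) \, \mathcal{C}_\kappa(Y)}{\mathcal{C}_\kappa(I_m)},
\end{align*}
so that the expression becomes $\sum_{k=0}^\infty \frac{h^{(k)}(0)}{k!} \sum_{\kappa \in P_m^k} \mathcal{C}_\kappa(X)\,\mathcal{C}_\kappa(Y)/\mathcal{C}_\kappa(I_m)$.

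The final and decisive step is to reduce the index set from $P_m^k$ to $P_n^k$. I would invoke the standard property that a zonal polynomial evaluated at an $n \times n$ symmetric matrix vanishes whenever the partition $\kappa$ has more than $n$ nonzero parts, i.e. $\mathcal{C}_\kappa(Y) = 0$ whenever the length of $\kappa$ exceeds $n$ (see Muirhead~\cite{R1982}). Because $Y$ is $n \times n$, every term in the sum over $P_m^k$ indexed by a partition of length greater than $n$ contributes zero, and the sum collapses to $\kappa \in P_n^k$; for the surviving partitions $\mathcal{C}_\kappa(I_m) \ne 0$, so no division by zero occurs. This yields exactly the claimed identity.

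I expect the main obstacle to be the rigorous justification of the term-by-term integration rather than the algebra: one must confirm that the uniform convergence assumed for $h$ on $\mathbb{R}$ genuinely transfers to uniform convergence of the zonal-polynomial expansion over the compact manifold $V_{n, m}$, so that the interchange of integral and summation is valid. The vanishing of zonal polynomials for overlong partitions and the application of \eqref{eqn:int-C_HXH} are then routine once the interchange is secured.
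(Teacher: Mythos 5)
Your proposal is correct and follows essentially the same route as the paper: substitute the series \eqref{eqn:0P0-series} into \eqref{eqn:def-0P0}, integrate term by term using the uniform convergence of $h$, and apply \eqref{eqn:int-C_HXH}. The only difference is cosmetic --- you make explicit the reduction from $P_m^k$ to $P_n^k$ via the vanishing of $\mathcal{C}_\kappa$ on partitions of length exceeding $n$, a step the paper performs silently by writing $\sum_{\kappa\in P_n^k}$ from the outset.
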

\begin{proof}
From the uniform convergence of $h$ and \eqref{eqn:0P0-series}, 
the right-hand side of \eqref{eqn:def-0P0} can be integrated term by term as
\begin{align*}
 {}_{0} P_{0}^{(m,n)}(h^{(k)}(0): X, Y) 
 &= \int_{V_{n, m}} {}_{0}P_{0}(h^{(k)}(0): X H_1 Y H_1^\top) (d H_1)\\
 &=\sum_{k=0}^\infty\frac{h^{(k)}(0)}{k!}\sum_{\kappa\in P_n^k}\int_{V_{n, m}}\mathcal{C}_\kappa(XH_1YH_1^\top)(dH_1)\\
 &=\sum_{k=0}^\infty\frac{h^{(k)}(0)}{k!}\sum_{\kappa\in P_n^k}
 \frac{\mathcal{C}_\kappa(X)\mathcal{C}_\kappa(Y)}{\mathcal{C}_\kappa(I_m)}.
\end{align*}
The third term above is obtained by \eqref{eqn:int-C_HXH}.
\end{proof}

For an $m \times m$ positive definite $X$, 
we define ${}_{1}P_{1}(h^{(k)}(0): a; c; X )$ as the integral of the multivariate beta distribution as follows: 
\begin{align}
 {}_{1}P_{1}(h^{(k)}(0): a; c; X ) 
 &= \dfrac{\Gamma_m(c)}{\Gamma_m(a)\; \Gamma_m(c-a)}  \int_{\mathbf{0} < Y < I_m}
 {}_{0}P_{0}(h^{(k)}(0): XY ) |Y |^{a - \frac{m+1}{2}}  |I_m - Y |^{c-a - \frac{m+1}{2}} (dY), \label{eqn:def-1P1}
\end{align}
where $\mathrm{Re}(a) > \frac{1}{2}(m-1)$,  $\mathrm{Re}(c) > \frac{1}{2}(m-1)$,  and  $\mathrm{Re}(c-a) > \frac{1}{2}(m-1)$. 
Then, the function ${}_{1}P_{1}(h^{(k)}(0): a; c; X )$ in \eqref{eqn:def-1P1} can be expressed as
\begin{align}
\label{eqn:1P1-series}
{}_{1}P_{1}(h^{(k)}(0): a; c; X ) = \sum_{k=0}^\infty \dfrac{h^{(k)}(0)}{k !} \sum_{\kappa \in P_m^k} \dfrac{(a)_\kappa}{(c)_\kappa} \mathcal{C}_\kappa(X).
\end{align}
The above function \eqref{eqn:1P1-series} was firstly defined by D{\'i}az-Garc{\'i}a and Caro-Lopera~\cite{GarciaCaro2008}. 
If $h(x)=\exp(x)$, then we have ${}_{1}P_{1}(1; a; c; X ) = {}_1F_1(a; c; X)$ 
that is the confluent hypergeometric function of a matrix argument $X$.
Analogous to \eqref{eqn:def-0P0}, the generalized heterogeneous hypergeometric function of type $(1,1)$, 
${}_{1}P_{1}(h^{(k)}(0): a; c; X )$ is defined as
\begin{align}
 {}_{1} P_{1}^{(m,n)}(h^{(k)}(0): a; c; X, Y) = \int_{V_{n, m}} {}_{1}P_{1}(h^{(k)}(0): a; c; X H_1 Y H_1^\top) (d H_1) 
 \label{eqn:def-1P1-hetero}
\end{align}
for an $m \times m$ positive definite $X$ and an $n \times n$ positive definite $Y$, where $m \ge n$.
The following theorem holds in the same way as Theorem~\ref{thm:0P0-C}. 
\begin{theorem}  \label{thm:1P1-C}
Let $h(x) \in \mathbb{C}^\infty$ with uniform convergence in $\mathbb{R}$. 
For an $m \times m$ positive definite $X$ and an $n \times n$ positive definite $Y$, where $m \ge n$, 
the function $_{0}P_{0}^{(m, n)}(h^{(k)}(0): X, Y)$ defined in \eqref{eqn:def-1P1-hetero} is an infinite series of zonal polynomials as  
\begin{align*}
 {}_{1} P_{1}^{(m,n)}(h^{(k)}(0): a; c; X, Y) = \sum_{k=0} \dfrac{h^{(k)}(0)}{k !} \sum_{\kappa \in P_n^k}
 \dfrac{(a)_\kappa}{(c)_\kappa}
 \dfrac{ \mathcal{C}_\kappa(X) \; \mathcal{C}_\kappa(Y)}
{\mathcal{C}_\kappa(I_m)}.
\end{align*}
\end{theorem}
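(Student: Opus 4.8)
The plan is to follow the same three-step argument used for Theorem~\ref{thm:0P0-C}, now carrying the extra Pochhammer factor $(a)_\kappa/(c)_\kappa$ through the computation. First I would start from the definition \eqref{eqn:def-1P1-hetero} and replace the integrand ${}_{1}P_{1}(h^{(k)}(0): a; c; X H_1 Y H_1^\top)$ by its zonal-polynomial series \eqref{eqn:1P1-series} evaluated at the $m\times m$ symmetric matrix $X H_1 Y H_1^\top$. This produces a double sum over $k$ and over $\kappa\in P_m^k$ inside the Stiefel-manifold integral, with each summand proportional to $\frac{h^{(k)}(0)}{k!}\frac{(a)_\kappa}{(c)_\kappa}\mathcal{C}_\kappa(X H_1 Y H_1^\top)$.

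Second, I would interchange summation and integration. Since $h\in C^\infty$ with uniform convergence on $\mathbb{R}$, the series converges uniformly over the compact manifold $V_{n,m}$, so term-by-term integration is justified exactly as in the proof of Theorem~\ref{thm:0P0-C}. The scalar factor $\frac{h^{(k)}(0)}{k!}\frac{(a)_\kappa}{(c)_\kappa}$ is independent of $H_1$ and factors out of each integral. At this stage I would also observe that $X H_1 Y H_1^\top$ has rank at most $n$, so $\mathcal{C}_\kappa(X H_1 Y H_1^\top)=0$ whenever the length of $\kappa$ exceeds $n$; consequently the inner sum collapses from $P_m^k$ to $P_n^k$, which is precisely the index set appearing in the conclusion.

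Third, I would apply the integral formula \eqref{eqn:int-C_HXH} to each surviving term, replacing $\int_{V_{n,m}}\mathcal{C}_\kappa(X H_1 Y H_1^\top)(dH_1)$ by $\mathcal{C}_\kappa(X)\,\mathcal{C}_\kappa(Y)/\mathcal{C}_\kappa(I_m)$. Reassembling the factors then gives the claimed expansion, with the weights $(a)_\kappa/(c)_\kappa$ reproduced verbatim.

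The step I expect to be the main obstacle is the rigorous justification of the term-by-term integration together with the use of the series \eqref{eqn:1P1-series} at the singular argument $X H_1 Y H_1^\top$: the representation \eqref{eqn:1P1-series} was stated for a positive definite argument, whereas $H_1 Y H_1^\top$ is only positive semidefinite of rank $n$ when $n<m$. I would resolve this by noting that zonal polynomials are defined and continuous for arbitrary symmetric matrices, so both \eqref{eqn:1P1-series} and the identity \eqref{eqn:int-C_HXH} extend by continuity from the positive definite case, and uniform convergence then secures the interchange. Everything else is a direct transcription of the type-$(0,0)$ argument with the $(a)_\kappa/(c)_\kappa$ coefficients carried along.
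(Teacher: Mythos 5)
Your proposal matches the paper's proof essentially verbatim: expand ${}_1P_1$ by its zonal series, integrate term by term over $V_{n,m}$ using uniform convergence, and apply \eqref{eqn:int-C_HXH}, exactly as the paper does by mirroring Theorem~\ref{thm:0P0-C}. Your extra remarks on the rank-based collapse from $P_m^k$ to $P_n^k$ and the extension to positive semidefinite arguments are points the paper passes over silently, so your write-up is if anything slightly more careful.
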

\begin{proof}
From the uniform convergence of $h$ and \eqref{eqn:def-1P1}, 
the right-hand side of \eqref{eqn:def-1P1-hetero} can be integrated term by term as
\begin{align*}
 {}_{1} P_{1}^{(m,n)}(h^{(k)}(0): a; c; X, Y)
 &= \int_{V_{n, m}} {}_{1}P_{1}(h^{(k)}(0): a; c; X H_1 Y H_1^\top) (d H_1)\\
 &=\sum_{k=0}^\infty\frac{h^{(k)}(0)}{k!}\sum_{\kappa\in P_n^k}\frac{(a)_\kappa}{(c)_\kappa}\int_{V_{n, m}}\mathcal{C}_\kappa(XH_1YH_1^\top)(dH_1)\\
 &=\sum_{k=0}^\infty\frac{h^{(k)}(0)}{k!}\sum_{\kappa\in P_n^k}\frac{(a)_\kappa}{(c)_\kappa}
 \frac{\mathcal{C}_\kappa(X)\mathcal{C}_\kappa(Y)}{\mathcal{C}_\kappa(I_m)}
\end{align*}
in the same proof of Theorem~\ref{thm:0P0-C}.
\end{proof}

Generally, if there exists 
\begin{align*}
{}_rP_s (h^{(k)}(0): \vec{\alpha}; \vec{\beta}; X)=
\sum_{k=0}^\infty \frac{h^{(k)}(0)}{k!}\sum_{\kappa\in P^k_m}\frac{(\alpha_1)_\kappa\cdots(\alpha_r)_\kappa}{
(\beta_1)_\kappa\cdots(\beta_s)_\kappa}
\mathcal{C}_\kappa(X)
\end{align*}
for an $m \times m$ symmetric matrix $X$, 
$\vec{\alpha}=(\alpha_1, \dots, \alpha_r)$ and $\vec{\beta}=(\beta_1, \dots, \beta_s)$, 
then the function ${}_rP_s^{(m, n)}(h^{(k)}(0): \vec{\alpha}; \vec{\beta}; X, Y)$ is defined by
 \begin{align*}
{}_rP_s^{(m, n)}(h^{(k)}(0): \vec{\alpha}; \vec{\beta}; X, Y)=
\int_{H_1\in V_{n, m}}{}_rP_s(h^{(k)}(0): \vec{\alpha}; \vec{\beta}; XH_1YH_1^\top )(dH_1)
\end{align*}
in addition to an $n \times n$ symmetric matrix $Y$.
From the uniform convergence of $h$ and \eqref{eqn:0P0-series}, 
function ${}_rP_s^{(m, n)}(h^{(k)}(0): \vec{\alpha}, \vec{\beta}; X, Y)$ has the following infinite series expansion:
\begin{align}
{}_rP_s^{(m, n)}(h^{(k)}(0): \vec{\alpha}; \vec{\beta}, X; Y)=
\sum_{k=0}^\infty \frac{h^{(k)}(0)}{k!}\sum_{\kappa\in P^k_u}\frac{(\alpha_1)_\kappa\cdots(\alpha_r)_\kappa}{(\beta_1)_\kappa\cdots(\beta_s)_\kappa}
\frac{\mathcal{C}_\kappa(X)\mathcal{C}_\kappa(Y)}{\mathcal{C}_\kappa(I_u)}.
\label{eqn:sPs-series}
\end{align}
It is clear that 
$
{}_rP_s(h^{(k)}(0): \vec{\alpha}; \vec{\beta}; X)=
{}_rP_s^{(m, m)}(h^{(k)}(0): \vec{\alpha};\vec{\beta}; X, I_m). \label{eq:mm}
$
To discuss the convergence condition of \eqref{eqn:sPs-series},
we provide the following theorem.
\begin{theorem} \label{thm:conv-rPs}
Suppose that $h(x) \in C^\infty$ with uniform convergence in $\mathbb{R}$, and there exists a constant $M < \infty$ such that  
$$
 M \ge \sup\{ 
  | h^{(k)}(0) | \mid k=1, 2, \dots
 \},
$$
then we have 
$$
  {}_rP_s(h^{(k)}(0): \vec{\alpha}; \vec{\beta}; X) \le M \; {}_r F_s(\vec{\alpha}; \vec{\beta}; X)
$$
for an $m \times m$ positive definite matrix $X$, $\vec{\alpha}=(\alpha_1, \dots, \alpha_r)$, and $\vec{\beta}=(\beta_1, \dots, \beta_s)$,
where ${}_r F_s(\vec{\alpha}; \vec{\beta}; X)$ is the hypergeometric function of a matrix argument $X$.
\end{theorem}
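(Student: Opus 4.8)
The plan is to compare the defining series of ${}_rP_s$ and ${}_rF_s$ term by term, exploiting the nonnegativity of every building block of each series when $X$ is positive definite. First I would recall that, because $X>0$, each zonal polynomial satisfies $\mathcal{C}_\kappa(X)\ge 0$, and that under the standing parameter conditions (concretely, $\alpha_i,\beta_j>(m-1)/2$, which guarantees that every factor $\{\alpha-(i-1)/2\}_{\kappa_i}$ and $\{\beta-(i-1)/2\}_{\kappa_i}$ is positive) the Pochhammer ratio $(\alpha_1)_\kappa\cdots(\alpha_r)_\kappa/\{(\beta_1)_\kappa\cdots(\beta_s)_\kappa\}$ is nonnegative for every partition $\kappa$. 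Consequently, for each $k$ the inner sum
\begin{align*}
T_k=\sum_{\kappa\in P_m^k}\frac{(\alpha_1)_\kappa\cdots(\alpha_r)_\kappa}{(\beta_1)_\kappa\cdots(\beta_s)_\kappa}\,\mathcal{C}_\kappa(X)
\end{align*}
is a nonnegative real number.

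Next I would use the hypothesis $h^{(k)}(0)\le|h^{(k)}(0)|\le M$ to bound the $k$-th term of ${}_rP_s$ by that of $M\,{}_rF_s$. Since $T_k\ge 0$, we have $\tfrac{h^{(k)}(0)}{k!}T_k\le\tfrac{M}{k!}T_k$ for every $k$, and summing over $k$ gives
\begin{align*}
{}_rP_s(h^{(k)}(0):\vec{\alpha};\vec{\beta};X)=\sum_{k=0}^\infty\frac{h^{(k)}(0)}{k!}\,T_k\le M\sum_{k=0}^\infty\frac{1}{k!}\,T_k=M\,{}_rF_s(\vec{\alpha};\vec{\beta};X),
\end{align*}
which is exactly the claimed inequality.

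The step that requires care is the legitimacy of manipulating these series as genuine (absolutely convergent) objects rather than merely formal ones, and here the dominating series $M\,{}_rF_s$ does the work. The hypergeometric function ${}_rF_s(\vec{\alpha};\vec{\beta};X)$ converges on the relevant domain of $X$ (for all $X$ when $r\le s$, and for $\|X\|<1$ when $r=s+1$), so the nonnegative series $\sum_k\tfrac{1}{k!}T_k$ converges there. Because $\bigl|\tfrac{h^{(k)}(0)}{k!}T_k\bigr|\le\tfrac{M}{k!}T_k$, the comparison test then shows that the series defining ${}_rP_s$ converges absolutely, which simultaneously justifies the termwise rearrangement above and establishes convergence of ${}_rP_s$ on the same domain as ${}_rF_s$. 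The main obstacle I anticipate is not the estimate itself but making the positivity of the Pochhammer ratios precise, since that is the only place where the parameter restrictions genuinely enter; I would therefore state this positivity explicitly rather than treat it as routine, as it is what allows the absolute-value bound on $h^{(k)}(0)$ to pass through each summand without sign cancellation.
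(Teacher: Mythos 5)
Your proof is correct and follows essentially the same route as the paper's: a termwise comparison of the series for ${}_rP_s$ with that of $M\,{}_rF_s$ using the bound $h^{(k)}(0)\le M$. The only difference is that you make explicit the nonnegativity of $\mathcal{C}_\kappa(X)$ and of the Pochhammer ratios (and the resulting absolute convergence by comparison), which the paper's one-line proof leaves implicit; this is a welcome clarification rather than a different argument.
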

\begin{proof} It is clear that 
\begin{align*}
{}_rP_s (h^{(k)}(0): \vec{\alpha}; \vec{\beta}; X) &=
\sum_{k=0}^\infty \frac{h^{(k)}(0)}{k!}\sum_{\kappa\in P^k_m}\frac{(\alpha_1)_\kappa\cdots(\alpha_r)_\kappa}{
(\beta_1)_\kappa\cdots(\beta_s)_\kappa} \mathcal{C}_\kappa(X)
\\
& \le \sum_{k=0}^\infty \frac{M}{k!}\sum_{\kappa\in P^k_m}\frac{(\alpha_1)_\kappa\cdots(\alpha_r)_\kappa}{
(\beta_1)_\kappa\cdots(\beta_s)_\kappa} \mathcal{C}_\kappa(X)
= M \; {}_r F_s(\vec{\alpha}; \vec{\beta}; X).
\end{align*}
\end{proof}

Theorem~\ref{thm:conv-rPs} implies that the convergence condition of ${}_rP_s$
is almost the same as that of ${}_r F_s$.
If $h(y) =\exp(-y/2)/(2\pi)^{m n / 2}$, the function 
${}_rP_s^{(m, n)}(h^{(k)}(0): \vec{\alpha}; \vec{\beta}; X, Y)$ corresponds to the 
heterogeneous hypergeometric function of two matrix arguments ${}_r F_{s}^{(m, n)}(\vec{\alpha};\vec{\beta}; X, Y)$ 
introduced in Shimizu and Hashiguchi~\cite{S2021a}.

\section{Exact distribution of eigenvalues of a singular elliptical Wishart matrix}\label{exact}
In this section, we derive the joint density of the eigenvalues and the largest eigenvalue of a singular elliptical Wishart matrix. 
These results are an extension of the results from Shimizu and Hashiguchi~\cite{S2021a}.
\begin{theorem} \label{thm:joint-dis}
Let $W\sim\mathcal{EW}_m(n, \Sigma, h)$, where $n<m$. 
Then the joint density function of $\ell_1, \dots, \ell_n$ is given as 
\begin{align}
f(\ell_1, \dots, \ell_n) 
&=\frac{\pi^{n(n+m)/2}}{|\Sigma|^{n/2}\Gamma_n(n/2)\Gamma_n(m/2)}|L_1|^{(m-n-1)/2}
\prod_{i<j}^{n}(\ell_i-\ell_j)
\,{}_0P_0^{(m, n)}(h^{(k)}(0): \Sigma^{-1}, L_1), \label{eq:join}
\end{align}
where $L_1=\mathrm{diag}(\ell_1, \dots, \ell_n)$. 
\end{theorem}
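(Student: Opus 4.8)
The plan is to start from the density \eqref{eq:densityW} of the singular elliptical Wishart matrix $W$, push it forward through the spectral decomposition $W = H_1 L_1 H_1^\top$ with $H_1 \in V_{n,m}$ and $L_1 = \mathrm{diag}(\ell_1, \dots, \ell_n)$, and then integrate out the Stiefel variable $H_1$. This mirrors the derivation of Sugiyama~\cite{S1967} and its singular-case adaptation in Shimizu and Hashiguchi~\cite{S2021a}.

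First I would invoke the Jacobian for the singular transformation $W \mapsto (L_1, H_1)$ due to Uhlig~\cite{U1994} (see also Srivastava~\cite{S2003}), which can be written as
\begin{align*}
(dW) = 2^{-n}\,|L_1|^{m-n}\prod_{i<j}^{n}(\ell_i-\ell_j)\,(dL_1)(H_1^\top dH_1).
\end{align*}
Substituting $W = H_1 L_1 H_1^\top$ into \eqref{eq:densityW} and multiplying by this Jacobian, the two powers of $|L_1|$ combine as $(n-m-1)/2 + (m-n) = (m-n-1)/2$, which already produces the determinantal factor $|L_1|^{(m-n-1)/2}$ and the Vandermonde factor $\prod_{i<j}^{n}(\ell_i - \ell_j)$ appearing in \eqref{eq:join}.

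The remaining task is to integrate $h(\mathrm{tr}\,\Sigma^{-1}H_1 L_1 H_1^\top)$ over $H_1$. Here I would expand the integrand via \eqref{eqn:0P0-series} as $h(\mathrm{tr}\,\Sigma^{-1}H_1 L_1 H_1^\top) = {}_0P_0(h^{(k)}(0): \Sigma^{-1}H_1 L_1 H_1^\top)$, invoke the definition \eqref{eqn:def-0P0}, and use the relation $(dH_1) = \mathrm{Vol}(V_{n,m})^{-1}(H_1^\top dH_1)$, so that
\begin{align*}
\int_{V_{n,m}} h(\mathrm{tr}\,\Sigma^{-1}H_1 L_1 H_1^\top)(H_1^\top dH_1) = \mathrm{Vol}(V_{n,m})\;{}_0P_0^{(m,n)}(h^{(k)}(0): \Sigma^{-1}, L_1),
\end{align*}
with $\mathrm{Vol}(V_{n,m}) = 2^n \pi^{mn/2}/\Gamma_n(m/2)$ from \eqref{vol_Steifel}. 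Collecting the constants, the $2^{-n}$ from the Jacobian cancels the $2^n$ in the volume, the powers of $\pi$ combine as $\pi^{n^2/2}\cdot\pi^{mn/2} = \pi^{n(n+m)/2}$, and the gamma functions assemble into $\Gamma_n(n/2)\Gamma_n(m/2)$, yielding exactly \eqref{eq:join}.

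The main obstacle is the correct bookkeeping of the singular Jacobian: unlike the nonsingular case, the power $|L_1|^{m-n}$ arises from the extra degrees of freedom in parametrizing the rank-$n$ matrix $W$ over $V_{n,m}$, and one must be careful that $(H_1^\top dH_1)$ is the \emph{unnormalized} form, so that the volume factor \eqref{vol_Steifel} enters the constant explicitly rather than being absorbed. The term-by-term integration over the Stiefel manifold is justified by the uniform convergence of $h$, exactly as in the proof of Theorem~\ref{thm:0P0-C}.
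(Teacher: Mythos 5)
Your proposal is correct and follows essentially the same route as the paper: Uhlig's Jacobian for $W=H_1L_1H_1^\top$, substitution into the density \eqref{eq:densityW}, term-by-term integration of the expanded $h(\mathrm{tr}\,\Sigma^{-1}H_1L_1H_1^\top)$ over $V_{n,m}$ via \eqref{eqn:int-C_HXH} and \eqref{vol_Steifel}, and the same collection of constants. Your explicit bookkeeping of the exponent $(n-m-1)/2+(m-n)=(m-n-1)/2$ and of the normalization of $(H_1^\top dH_1)$ versus $(dH_1)$ is accurate and, if anything, slightly more careful than the paper's own write-up.
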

\begin{proof}
The Jacobian of the spectral decomposition $W=H_1L_1H_1^\top$ was given by Uhlig~\cite{U1994} as
\begin{align*} 
(dW) =2^{-n} |L_1|^{m-n}\prod_{i <  j}^n(\ell_i-\ell_j)(H_1^\top dH_1)(dL_1). 
\end{align*} 
Using the above relationship, the joint density of $L_1$ and $H_1$ is obtained from \eqref{eq:densityW} as 
\begin{align} 
\frac{\pi^{{n+m}/2}}{|\Sigma|^{n/2}\Gamma_n(n/2)\Gamma_n(m/2)}|L_1|^{(m-n-1)/2} \prod_{ i<j}^{n}(\ell_i-\ell_j) 
h(\mathrm{tr}\Sigma^{-1}H_1 L_1 H_1^\top).
 \label{eq:densityW-2}
\end{align} 
Furthermore, the Maclaurin expansion of $h(\cdot)$ in \eqref{eq:densityW-2} can be written as 
\begin{align*}
h(\mathrm{tr}\Sigma^{-1}H_1 L_1 H_1^\top)&=\sum_{k=0}^{\infty}\frac{h^{k}(0)}{k!}(\mathrm{tr}\Sigma^{-1}H_1 L_1 H_1^\top)^k\\
&=\sum_{k=0}^{\infty}\frac{h^{k}(0)}{k!}\sum_{\kappa\in P^k_m}\mathcal{C}_\kappa (\Sigma^{-1}H_1 L_1 H_1^\top).
\end{align*}
Hence, we get the joint density of $\ell_1, \dots, \ell_n$ as
\begin{align*}
\frac{\pi^{n(n+m)/2}}{|\Sigma|^{n/2}\Gamma_n(n/2)\Gamma_n(m/2)}|L_1|^{(m-n-1)/2}
\prod_{ i<j}^{n}(\ell_i-\ell_j) 
\sum_{k=0}^{\infty}\frac{h^{k}(0)}{k!}\sum_{\kappa\in P^k_m}\int_{V_{n, m}}\mathcal{C}_\kappa (\Sigma^{-1}H_1 L_1 H_1^\top) (H_1^\top d H_1).
\end{align*}
From \eqref{eqn:int-C_HXH} and \eqref{vol_Steifel}, we obtain the desired result.
\end{proof}

If $h(y) =\exp(-y/2)/(2\pi)^{m n / 2}$ in Theorem~\ref{thm:joint-dis}, 
the corresponding joint density function is the same as that in Shimizu and Hashiguchi~\cite{S2021a}.
In the same manner as Shimizu and Hashiguchi~\cite{S2021a}, 
we also provide the distribution function of the largest eigenvalue of $W$ by using a useful lemma from Sugiyama~\cite{S1967}. 
Let $X_1=\mathrm{diag}(1, x_2, \dots, x_n)$, $X_2=\mathrm{diag}(x_2, \dots, x_n)$, where $x_2>\cdots>x_n>0$. 
Sugiyama~\cite{S1967} provided the following lemma as
\begin{align} \label{eqn:Sugiyama-formula}
&\int_{1>x_2>\cdots>x_n>0}|X_2|^{t-(n+1)/2}\mathcal{C}_\kappa(X_1)\prod_{i=2}^{n}(1-x_i)\prod_{i<j}(x_i-x_j)\prod_{i=2}^n dx_i\nonumber\\
=&(nt+k)
\frac{\Gamma_n(n/2)(t)_\kappa\Gamma_n(t)\Gamma_n((n+1)/2)}
{\pi^{n^2/2}(t+(n+1)/2)_\kappa\Gamma_n(t+(n+1)/2)}\mathcal{C}_\kappa (I_n),
\end{align}
where $ \mathrm{Re}(t) >\frac{1}{2}(n-1)$.
The above equation \eqref{eqn:Sugiyama-formula} is a special case of 
\begin{align} \label{eqn:Sugiyama-formula2}
\nonumber
 T(a, b) :=& \int_{1>x_1>x_2>\cdots>x_n>0}
 \mathcal{C}_\kappa(X)|X|^{a-(n+1)/2}|I-X|^{b-(n+1)/2}\prod_{i<j}(x_i-x_j)\prod_{i=1}^ndx_i
 \\
=&\frac{\Gamma_n(n/2) }{\pi^{n^2/2}}\frac{(a)_\kappa}{(a+b)_\kappa}
\frac{\Gamma_n(a)\Gamma_n(b)}{\Gamma_n(a+b)}\mathcal{C}_\kappa(I_n), 
\end{align}
for $X=\mathrm{diag}(x_1, \dots, x_n)$, where
$ \mathrm{Re}(a) >\frac{1}{2}(n-1)$, 
$ \mathrm{Re}(b) >\frac{1}{2}(n-1)$.
The above equation \eqref{eqn:Sugiyama-formula2} is 
equivalent to the well-known formula as 
$$
\int_0^1\cdots \int_0^1 \mathcal{C}_\kappa(X)|X|^{a-(n+1)/2}|I-X|^{b-(n+1)/2}\prod_{i<j}(x_i-x_j)\prod_{i=1}^ndx_i
=n! \; T(a, b)
$$
which is referred to as the Selberg's integral without eigenvalue ordering, see Macdonald~\cite{M2013}. 

\begin{theorem} \label{th:dist}
Let $W\sim \mathcal{EW}_m(n, \Sigma, h)$, where $n<m$. 
Then, the distribution function of the largest eigenvalue $\ell_1$ of $W$ is given as: 
\begin{align}
\label{ell1-hetero}
\mathrm{Pr}(\ell_1<x)
=&
\frac{\pi^{mn/2}\Gamma_n((n+1)/2)}{\Gamma_n((m+n+1)/2)}|x\Sigma^{-1}|^{n/2}
\,{}_1P_1^{(m, n)}\left(h^{(k)}(0): \frac{m}{2}; \frac{m+n+1}{2}; \Sigma^{-1}, xI_n\right). 
\end{align}
\end{theorem}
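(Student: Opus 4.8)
The plan is to integrate the joint density of Theorem~\ref{thm:joint-dis} over the ordered region determined by $\ell_1 < x$, and then to recognize the resulting zonal-polynomial series as a ${}_1P_1^{(m,n)}$ function. Writing $L_1 = \mathrm{diag}(\ell_1, \dots, \ell_n)$, I would start from
\begin{align*}
\mathrm{Pr}(\ell_1 < x) = \int_{x > \ell_1 > \cdots > \ell_n > 0} f(\ell_1, \dots, \ell_n)\, d\ell_1 \cdots d\ell_n,
\end{align*}
substitute the expression for $f$, and expand the factor ${}_0P_0^{(m,n)}(h^{(k)}(0): \Sigma^{-1}, L_1)$ into its zonal-polynomial series via Theorem~\ref{thm:0P0-C}. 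Because $h$ converges uniformly, I can interchange the integral with the double series, so the problem reduces to evaluating, for each partition $\kappa \in P_n^k$, the integral
\begin{align*}
I_\kappa(x) = \int_{x > \ell_1 > \cdots > \ell_n > 0} |L_1|^{(m-n-1)/2}\, \mathcal{C}_\kappa(L_1) \prod_{i<j}^n (\ell_i - \ell_j)\, d\ell_1 \cdots d\ell_n.
\end{align*}

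Next I would rescale by setting $\ell_i = x u_i$, which maps the region onto the unit simplex $1 > u_1 > \cdots > u_n > 0$. Using the homogeneity $\mathcal{C}_\kappa(x U) = x^k \mathcal{C}_\kappa(U)$ together with the degree contributions of $|L_1|^{(m-n-1)/2}$, the Vandermonde product, and the $n$ differentials, a routine bookkeeping of the powers of $x$ pulls out the factor $x^{nm/2 + k}$ in front of the corresponding integral over the unit simplex. The key step is then to identify this remaining integral with Sugiyama's formula \eqref{eqn:Sugiyama-formula2}: choosing $a = m/2$ and $b = (n+1)/2$ makes the exponent $b - (n+1)/2$ vanish, so the factor $|I - U|^{b-(n+1)/2}$ disappears, exactly matching the absence of such a determinant in $I_\kappa$, while $a - (n+1)/2 = (m-n-1)/2$ reproduces the correct power of $|U|$. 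Both parameter constraints $\mathrm{Re}(a), \mathrm{Re}(b) > (n-1)/2$ hold because $m > n$.

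Applying \eqref{eqn:Sugiyama-formula2} with these parameters gives $I_\kappa(x) = x^{nm/2+k}\, T(m/2, (n+1)/2)$, in which $a + b = (m+n+1)/2$, so the Pochhammer ratio becomes $(m/2)_\kappa / ((m+n+1)/2)_\kappa$ and the gamma factors collapse to $\Gamma_n(m/2)\Gamma_n((n+1)/2)/\Gamma_n((m+n+1)/2)$, accompanied by $\mathcal{C}_\kappa(I_n)$. I would then substitute this back, at which point the $\Gamma_n(n/2)$ and $\Gamma_n(m/2)$ of the density prefactor cancel, the powers of $\pi$ combine to $\pi^{nm/2}$, and the surviving factor $x^{nm/2}|\Sigma|^{-n/2} = |x\Sigma^{-1}|^{n/2}$ appears. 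Recognizing $x^k \mathcal{C}_\kappa(I_n) = \mathcal{C}_\kappa(x I_n)$, the double series becomes precisely the series \eqref{eqn:sPs-series} for ${}_1P_1^{(m,n)}(h^{(k)}(0): m/2; (m+n+1)/2; \Sigma^{-1}, x I_n)$, which is the claimed formula.

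I expect the main obstacle to be the correct specialization of Sugiyama's integral: in particular, realizing that taking $b=(n+1)/2$ removes the $|I-U|$ factor, so that \eqref{eqn:Sugiyama-formula2} applies to the singular density, which carries no $|I-L_1|$ term. The secondary difficulty is the careful tracking of the homogeneity degree and of the numerous gamma and $\pi$ constants through the cancellation. The interchange of summation and integration needs only the stated uniform convergence of $h$, exactly as in the proofs of Theorems~\ref{thm:0P0-C} and~\ref{thm:1P1-C}, so it should pose no real difficulty.
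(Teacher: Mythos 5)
Your proof is correct, and it reaches \eqref{ell1-hetero} by a slightly different route than the paper. The paper follows Sugiyama's original two-step scheme: it substitutes $x_i=\ell_i/\ell_1$ ($i=2,\dots,n$) into the joint density of Theorem~\ref{thm:joint-dis}, applies the lemma \eqref{eqn:Sugiyama-formula} (whose $(nt+k)\ell_1^{nt+k-1}$ prefactor is tailored to this substitution) to obtain the marginal density $f(\ell_1)$ as an intermediate object, and only then integrates in $\ell_1$ to get the distribution function. You instead integrate the joint density directly over $\{x>\ell_1>\cdots>\ell_n>0\}$, rescale all $n$ variables by $x$ to land on the unit ordered simplex, and invoke the more general beta-type integral \eqref{eqn:Sugiyama-formula2} with $a=m/2$, $b=(n+1)/2$ so that the $|I-U|$ factor disappears; your bookkeeping of the homogeneity degree ($x^{nm/2+k}$), the parameter constraints, and the cancellation of $\Gamma_n(n/2)$, $\Gamma_n(m/2)$ and the powers of $\pi$ all check out, and $x^k\mathcal{C}_\kappa(I_n)=\mathcal{C}_\kappa(xI_n)$ correctly reassembles the series \eqref{eqn:sPs-series}. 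The trade-off is minor: your one-shot integration is cleaner and avoids the $(nt+k)$ prefactor gymnastics, while the paper's route produces the density of $\ell_1$ as a byproduct of independent interest. Both rest on the same interchange of the zonal-polynomial series with the integral, justified by the uniform convergence of $h$ exactly as in Theorems~\ref{thm:0P0-C} and~\ref{thm:1P1-C}.
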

\begin{proof}
Translating $x_i=\ell_i/\ell_1$ for $i=2, \dots, n $ and using \eqref{eqn:Sugiyama-formula} with  \eqref{eq:join}, the density function of $\ell_1$ is given as 
\begin{align*}
f(\ell_1)
&=\frac{\pi^{n(n+m)/2}|\Sigma|^{-n/2}}{\Gamma_n(n/2)\Gamma_n(m/2)}\int_{1>x_2>\cdots >x_n>0}
|X_2|^{(m-n-1)/2}\prod_{i=2}^n(1-x_i)\prod_{2\le i<j}^n(x_i-x_j)
\mathcal{C}_\kappa(X_2)\\
&\times \sum_{k=0}^\infty\frac{h^{(k)}(0)}{k!}\ell_1^{(mn/2+k-1)} \sum_{\kappa \in P^k_n} \frac{\mathcal{C}_\kappa(\Sigma^{-1})}{\mathcal{C}_\kappa(I_m)} \prod_{i=2}^n dx_i\\
=&\frac{\pi^{mn/2}\Gamma_n((n+1)/2)}{\Gamma_n((m+n+1)/2)}|\Sigma^{-1}|^{n/2} \sum_{k=0}^\infty (mn/2+k)\ell_1^{mn/2+k-1}\\ \nonumber
&\times\frac{h^{(k)}(0)}{k!}\sum_{\kappa \in P^k_n}\frac{(m/2)_\kappa}{((m+n+1)/2)_\kappa}
\frac{\mathcal{C}_\kappa(\Sigma^{-1})\mathcal{C}_\kappa(I_n)}{\mathcal{C}_\kappa(I_m)}.
\end{align*} 
Finally, by integrating $f(\ell_1)$ with respect to $\ell_1$, we obtain the distribution function of $\ell_1$, and thus the proof is complete.
\end{proof}

Corollary~\ref{cor:Im} shows that when $\Sigma=I_m$, \eqref{ell1-hetero} can be represented in terms of the generalized hypergeometric functions of a single matrix argument of order $n$.
\begin{corollary} \label{cor:Im}
If $\Sigma=I_m$ in Theorem \ref{th:dist}, then we have
\begin{align*}
\mathrm{Pr}(\ell_1<x)
&=\frac{\pi^{mn/2}\Gamma_n((n+1)/2)}{\Gamma_n((m+n+1)/2)}x^{mn/2}
\, {}_1P_1\left(h^{(k)}(0): \frac{m}{2}; \frac{m+n+1}{2}, xI_n\right),
\end{align*}
where ${_1P_1}$ is given in \eqref{eqn:def-1P1}.
\end{corollary}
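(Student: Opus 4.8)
The plan is to obtain Corollary~\ref{cor:Im} as a direct specialization of Theorem~\ref{th:dist} at $\Sigma = I_m$, showing that the two-matrix-argument function ${}_1P_1^{(m,n)}$ collapses onto the single-matrix-argument function ${}_1P_1$ of \eqref{eqn:1P1-series}. First I would simplify the leading constant: with $\Sigma = I_m$ the matrix $x\Sigma^{-1} = xI_m$ is $m\times m$, so $|x\Sigma^{-1}|^{n/2} = (x^m)^{n/2} = x^{mn/2}$, which reproduces the prefactor claimed in the corollary while leaving the ratio of gamma factors untouched.

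Next I would invoke Theorem~\ref{thm:1P1-C} to expand the hypergeometric factor into its zonal-polynomial series with $\Sigma^{-1}$ replaced by $I_m$:
\begin{align*}
{}_1P_1^{(m,n)}\!\left(h^{(k)}(0): \tfrac{m}{2}; \tfrac{m+n+1}{2}; I_m, xI_n\right)
= \sum_{k=0}^\infty \frac{h^{(k)}(0)}{k!}\sum_{\kappa\in P_n^k}\frac{(m/2)_\kappa}{((m+n+1)/2)_\kappa}\frac{\mathcal{C}_\kappa(I_m)\,\mathcal{C}_\kappa(xI_n)}{\mathcal{C}_\kappa(I_m)}.
\end{align*}
The crucial step is the cancellation $\mathcal{C}_\kappa(I_m)/\mathcal{C}_\kappa(I_m)=1$, which eliminates precisely the normalizing factor $\mathcal{C}_\kappa(I_m)$ that distinguishes the two-matrix object from the one-matrix one. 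What remains is term by term the defining series \eqref{eqn:1P1-series} of ${}_1P_1(h^{(k)}(0): m/2; (m+n+1)/2; xI_n)$; the index set $P_n^k$ is the correct one because $\mathcal{C}_\kappa(xI_n)$ vanishes whenever the length of $\kappa$ exceeds $n$.

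I do not expect a genuine obstacle here, since this is a clean specialization rather than a new estimate. The only point demanding care is keeping track of matrix sizes — that $x\Sigma^{-1}$ is $m\times m$ (fixing the determinant power $x^{mn/2}$) whereas the surviving argument $xI_n$ is $n\times n$ (fixing the partition index set). Once $\Sigma^{-1}=I_m$ is inserted, the cancellation of $\mathcal{C}_\kappa(I_m)$ does all the work and the corollary follows.
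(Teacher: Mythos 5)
Your proposal is correct and follows essentially the same route as the paper: substitute $\Sigma=I_m$, note $|xI_m|^{n/2}=x^{mn/2}$, expand ${}_1P_1^{(m,n)}$ via Theorem~\ref{thm:1P1-C}, cancel $\mathcal{C}_\kappa(I_m)$, and identify the surviving series with the single-argument ${}_1P_1$. In fact you are slightly more careful than the paper's displayed computation, which omits the outer sum over $k$ and writes $\mathcal{C}_\kappa(I_n)$ where $\mathcal{C}_\kappa(xI_n)$ is meant.
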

\begin{proof}
The required result is easily obtained from
\begin{align*}
{}_1P_1^{(m, n)}\left(h^{(k)}(0): \frac{m}{2}; \frac{m+n+1}{2};I_m, xI_n\right)
&=\sum_{\kappa \in P^k_n}\frac{(m/2)_\kappa}{((m+n+1)/2)_\kappa}
\frac{\mathcal{C}_\kappa(I_m)\mathcal{C}_\kappa(I_n)}{\mathcal{C}_\kappa(I_m)}\\
&={}_1P_1\left(h^{(k)}(0): \frac{m}{2}; \frac{m+n+1}{2}, xI_n\right).
\end{align*}
\end{proof}
\begin{corollary}\label{coro:general}
Under the same conditions as in Theorem \ref{th:dist}, the distribution function of $\ell_1$ is also represented by 
\begin{align}
\mathrm{Pr}(\ell_1<x)
\nonumber
&=\frac{\pi^{mn/2}\Gamma_n((n+1)/2)}{\Gamma_n((m+n+1)/2)}
|x\Sigma^{-1}|^{n/2}
{}_1P_1\left(h^{(k)}(0): \frac{n}{2}; \frac{m+n+1}{2}; x\Sigma^{-1}\right)\\ 
&=\frac{\pi^{mn/2}\Gamma_n((n+1)/2)}{\Gamma_n((m+n+1)/2)}
|x\Sigma^{-1}|^{n/2}{}_1P_1\left(h^{(k)}(\mathrm{tr} x\Sigma^{-1}): \frac{m+1}{2}; \frac{m+n+1}{2}; -x\Sigma^{-1}\right).
\label{kum-singular;dist-1}
\end{align}
\end{corollary}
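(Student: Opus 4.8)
The plan is to obtain both displayed identities directly from the representation already established in Theorem~\ref{th:dist}. Since the prefactor $\frac{\pi^{mn/2}\Gamma_n((n+1)/2)}{\Gamma_n((m+n+1)/2)}|x\Sigma^{-1}|^{n/2}$ is common to all three expressions, the entire work concerns the hypergeometric factor: first I would collapse the heterogeneous ${}_1P_1^{(m,n)}$ in \eqref{ell1-hetero} into a single-matrix ${}_1P_1$, and then apply a Kummer-type transformation to pass to the second line.

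For the first equality, I would expand ${}_1P_1^{(m,n)}$ through its series in Theorem~\ref{thm:1P1-C}, giving a sum over $\kappa\in P_n^k$ of $\frac{(m/2)_\kappa}{((m+n+1)/2)_\kappa}\frac{\mathcal{C}_\kappa(\Sigma^{-1})\mathcal{C}_\kappa(xI_n)}{\mathcal{C}_\kappa(I_m)}$. Two elementary facts then do the work: the homogeneity $\mathcal{C}_\kappa(xI_n)=x^k\mathcal{C}_\kappa(I_n)$, and the classical evaluation of the zonal polynomial at the identity, which yields $\mathcal{C}_\kappa(I_n)/\mathcal{C}_\kappa(I_m)=(n/2)_\kappa/(m/2)_\kappa$ because the remaining combinatorial factor depends only on $\kappa$ and not on the dimension. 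After cancelling $(m/2)_\kappa$, each term becomes $\frac{(n/2)_\kappa}{((m+n+1)/2)_\kappa}\mathcal{C}_\kappa(x\Sigma^{-1})$. Finally I would observe that the summation may be extended from $\kappa\in P_n^k$ to $\kappa\in P_m^k$ without change, since $(n/2)_\kappa=0$ whenever $\kappa$ has more than $n$ parts; this is exactly the series \eqref{eqn:1P1-series} for the single-argument ${}_1P_1(h^{(k)}(0):\frac{n}{2};\frac{m+n+1}{2};x\Sigma^{-1})$ with the $m\times m$ argument $x\Sigma^{-1}$, which is the first line.

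For the second equality, I would work from the integral definition \eqref{eqn:def-1P1} of ${}_1P_1(h^{(k)}(0):\frac{n}{2};\frac{m+n+1}{2};x\Sigma^{-1})$ and apply the substitution $Y\mapsto I_m-Y$. This interchanges the two density exponents, sending $a=\frac{n}{2}$ to $c-a=\frac{m+1}{2}$, and replaces the integrand ${}_0P_0(h^{(k)}(0):x\Sigma^{-1}Y)=h(\mathrm{tr}\,x\Sigma^{-1}Y)$ by $h(\mathrm{tr}\,x\Sigma^{-1}-\mathrm{tr}\,x\Sigma^{-1}Y)$. The crucial observation is that this last expression is precisely the Taylor expansion of $h$ about the point $\mathrm{tr}\,x\Sigma^{-1}$, that is, it equals ${}_0P_0(h^{(k)}(\mathrm{tr}\,x\Sigma^{-1}):-x\Sigma^{-1}Y)$ in the notation of \eqref{eqn:0P0-series}. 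Re-reading the transformed integral as an instance of \eqref{eqn:def-1P1} with generator shifted to $\mathrm{tr}\,x\Sigma^{-1}$, first parameter $\frac{m+1}{2}$, and argument $-x\Sigma^{-1}$ then yields the second line.

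The main obstacle is the generator shift in this Kummer step: one must justify rewriting $h(\mathrm{tr}\,X-\mathrm{tr}\,XY)$ as a ${}_0P_0$ whose derivatives are evaluated at $\mathrm{tr}\,X$ rather than at $0$, and then interchange this reindexed Taylor series with the beta-type integral. The standing hypothesis that $h\in C^\infty$ with uniform convergence in $\mathbb{R}$, together with the domination in Theorem~\ref{thm:conv-rPs}, is what licenses the term-by-term manipulation. By comparison, the dimension-independence of the combinatorial factor in $\mathcal{C}_\kappa(I_m)$ and the vanishing of $(n/2)_\kappa$ for partitions with more than $n$ parts are standard and need only be invoked.
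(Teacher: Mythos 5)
Your derivation of the first displayed equality is exactly the paper's argument: expand ${}_1P_1^{(m,n)}$ by Theorem~\ref{thm:1P1-C}, use the closed form of $\mathcal{C}_\kappa(I_m)$ to get $(m/2)_\kappa\,\mathcal{C}_\kappa(I_n)/\mathcal{C}_\kappa(I_m)=(n/2)_\kappa$ together with homogeneity of $\mathcal{C}_\kappa$, and extend the partition sum from $P_n^k$ to $P_m^k$ because $(n/2)_\kappa=0$ for partitions with more than $n$ parts. No complaints there.

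For the second equality you diverge from the paper, which simply invokes the Kummer relation \eqref{eq:Kummer} of D{\'i}az-Garc{\'i}a and Caro-Lopera. You instead try to prove that relation from the integral representation \eqref{eqn:def-1P1} via $Y\mapsto I_m-Y$ and the Taylor re-expansion of $h$ about $\mathrm{tr}\,x\Sigma^{-1}$. The idea is the standard one, but as written it does not apply to the parameters at hand: \eqref{eqn:def-1P1} requires $\mathrm{Re}(a)>\tfrac{1}{2}(m-1)$, whereas here $a=\tfrac{n}{2}$ and the corollary lives in the singular regime $n<m$, so $a=\tfrac{n}{2}\le\tfrac{1}{2}(m-1)$ and the beta-type integral defining ${}_1P_1(h^{(k)}(0):\tfrac{n}{2};\tfrac{m+n+1}{2};x\Sigma^{-1})$ is divergent (the exponent $a-\tfrac{m+1}{2}$ is too negative at the boundary $|Y|=0$). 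The substitution argument therefore manipulates an integral that does not represent the function you are transforming. To close the gap you must either establish the Kummer relation at the level of the series (both sides are analytic in the parameters, so prove it for $\mathrm{Re}(a)$ and $\mathrm{Re}(c-a)$ both large and continue analytically in $a$ and $c$), or do as the paper does and cite \eqref{eq:Kummer} directly, which already carries the needed generality. The uniform-convergence hypothesis on $h$ that you invoke licenses the term-by-term integration but does not rescue the convergence of the beta integral itself.
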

\begin{proof}
This proof is the same as that of Corollary~5 in Shimizu and Hashiguchi~\cite{S2021a}. 
The zonal polynomials are expressed for the length of the partition $p>0$, as 
\begin{align*}
\mathcal{C}_\kappa(I_m)=\frac{2^{2k}k!(m/2)_\kappa \prod_{i<j}^p(2\kappa_i-2\kappa_j-i+j)}{\prod_{i=1}^p(2\kappa_i+p-i)!}, 
\end{align*}
which yields 
$(m/2)_\kappa/\mathcal{C}_\kappa(I_m)=(n/2)_\kappa/\mathcal{C}_\kappa(I_n)$.
Furthermore, if the length of a partition $\kappa$ is $m$, then we have $(n/2)_\kappa=0$, where $m>n$. 
Then, the generalized heterogeneous hypergeometric function ${}_1P_1^{(m, n)}$ in \eqref{ell1-hetero} can be represented as
\begin{align}
\label{ell1-1P1}
\nonumber
&{}_1P_1^{(m, n)}\left(h^{(k)}(0): \frac{m}{2}; \frac{m+n+1}{2}; \Sigma^{-1}, xI_n\right)\\ \nonumber
&=\sum_{k=0}^\infty \frac{h^{(k)}(0)}{k!}\sum_{\kappa \in P^k_n}\frac{(m/2)_\kappa}{((m+n+1)/2)_\kappa}
\frac{\mathcal{C}_\kappa(\Sigma^{-1})\mathcal{C}_\kappa(I_n)}{\mathcal{C}_\kappa(I_m)}\\  
&=\sum_{k=0}^\infty \frac{h^{(k)}(0)}{k!}\sum_{\kappa \in P^k_m}\frac{(n/2)_\kappa \mathcal{C}_\kappa(\Sigma^{-1})}{((m+n+1)/2)_\kappa }={}_1P_1\left(h^{(k)}(0): \frac{n}{2};\frac{m+n+1}{2}; x\Sigma^{-1}\right).
\end{align}
D{\'i}az-Garc{\'i}a and Caro-Lopera~\cite{GarciaCaro2008} provided the Kummer relation of $_1P_1$ as 
\begin{align}
{}_1P_1(h^{(k)}(0): a; c; X)={}_1P_1(h^{(k)}(\mathrm{tr} X):c-a; c;-X)\label{eq:Kummer}
\end{align}
By applying \eqref{eq:Kummer} to \eqref{ell1-1P1}, we obtain the desired result.
\end{proof}

In the case that the elliptical Wishart matrix is nonsingular, Shinozaki et al.~\cite{S2018} gave the distribution of the largest eigenvalue $\ell_1$ in the same manner as Theorem~\ref{th:dist} as
\begin{align}
\label{nonsingular;dist-l1}
\mathrm{Pr}(\ell_1<x)=\frac{\pi^{mn/2}\Gamma_m((m+1)/2)}{\Gamma_m((n+m+1)/2)}|x\Sigma|^{-n/2}{}_{1}P_{1}\left(h^{(k)}(0): \frac{n}{2}; \frac{n+m+1}{2}; x\Sigma^{-1} \right), 
\end{align} 
where $n\geq m$.
Corollary~\ref{coro:general} is a generalized expression of \eqref{kum-singular;dist-1} and (\ref{nonsingular;dist-l1}). 
\begin{corollary}  \label{cor;dist-l1}
Let $W\sim\mathcal{EW}_m(n, \Sigma, h)$. Then, the distribution function of the largest eigenvalue $\ell_1$ of $W$ is given as: 
\begin{align}
 \label{dist-l1}
\mathrm{Pr}(\ell_1<x)=\frac{\pi^{mn/2}\Gamma_t((t+1)/2)}{\Gamma_t((n+m+1)/2)}|x\Sigma|^{-n/2}{}_{1}P_{1}\left(h^{(k)}(0): \frac{n}{2}; \frac{n+m+1}{2}; x\Sigma^{-1} \right), 
\end{align}
where $t=\mathrm{min}(n, m)$. 
\end{corollary}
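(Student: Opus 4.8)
The plan is to obtain \eqref{dist-l1} not through a fresh derivation but by exhibiting it as the single expression into which the two cases already treated in the paper collapse once the index $t=\min(n,m)$ is introduced. The key observation is that the singular-case distribution \eqref{kum-singular;dist-1} of Corollary~\ref{coro:general} and the nonsingular-case distribution \eqref{nonsingular;dist-l1} of Shinozaki et al.~\cite{S2018} are structurally the same: both carry the prefactor $\pi^{mn/2}$, the same Kummer-type function ${}_1P_1\!\left(h^{(k)}(0): n/2;\, (n+m+1)/2;\, x\Sigma^{-1}\right)$, and a leading determinant factor that, once written in a common form, agrees in both; they then differ in exactly one remaining place, namely the order of the multivariate gamma functions together with the matching shift $(n+1)/2$ versus $(m+1)/2$ in the numerator argument. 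Reading these off as instances of a single pattern indexed by $t$ is precisely what \eqref{dist-l1} encodes.

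Concretely, I would fix $t=\min(n,m)$ and argue by the two exhaustive cases. If $n<m$, then $W$ is singular and $t=n$; substituting $t=n$ into \eqref{dist-l1} turns the factor $\Gamma_t((t+1)/2)/\Gamma_t((n+m+1)/2)$ into $\Gamma_n((n+1)/2)/\Gamma_n((n+m+1)/2)$ while leaving the remaining factors untouched, so \eqref{dist-l1} reduces to \eqref{kum-singular;dist-1}. If $n\ge m$, then $W$ is nonsingular and $t=m$; the same substitution replaces the gamma order by $m$ and the shift by $(m+1)/2$, reducing \eqref{dist-l1} to \eqref{nonsingular;dist-l1}. The boundary case $n=m$ is consistent, since then $t=n=m$ and the two specializations coincide.

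The one point that needs genuine care --- and the step I expect to be the main, though light, obstacle --- is confirming that every quantity in \eqref{dist-l1} other than the gamma index is regime-independent. In particular the leading determinant factor must be brought to a single form valid for both $n<m$ and $n\ge m$, and one must check that the hypergeometric argument $x\Sigma^{-1}$, the parameter pair $(n/2,\, (n+m+1)/2)$, the power $\pi^{mn/2}$, and the generator derivatives $h^{(k)}(0)$ do not depend on the sign of $n-m$. Since \eqref{kum-singular;dist-1} and \eqref{nonsingular;dist-l1} already share all of these, the verification reduces to matching the single index $t=\min(n,m)$, after which \eqref{dist-l1} holds for all $m$ and $n$.
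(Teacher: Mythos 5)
Your overall strategy is exactly what the paper intends: Corollary~\ref{cor;dist-l1} is stated without a separate proof, as the single expression into which the singular case (Corollary~\ref{coro:general}, $t=n$) and the nonsingular case \eqref{nonsingular;dist-l1} ($t=m$) collapse, so the case split on $t=\min(n,m)$ is the right and intended argument.

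However, the one verification you flag as ``the main, though light, obstacle'' and then wave through does not actually go through on the formulas as printed, so your proof has a gap precisely where you said care was needed. The singular-case expression carries the factor $|x\Sigma^{-1}|^{n/2}=x^{mn/2}|\Sigma|^{-n/2}$, while \eqref{nonsingular;dist-l1} and the target \eqref{dist-l1} carry $|x\Sigma|^{-n/2}=x^{-mn/2}|\Sigma|^{-n/2}$; these differ by the factor $x^{mn}$ and are not ``the same determinant factor written in a common form.'' To close the argument you must resolve this discrepancy rather than assert it away: the Gaussian specialization at the end of Section~\ref{exact}, which exhibits the factor $(x/2)^{nm/2}|\Sigma|^{-n/2}$, shows that the correct common prefactor is $x^{mn/2}|\Sigma|^{-n/2}=|x\Sigma^{-1}|^{n/2}$ (a distribution function must vanish, not blow up, as $x\to 0$), so $|x\Sigma|^{-n/2}$ in \eqref{nonsingular;dist-l1} and \eqref{dist-l1} has to be read as $|x\Sigma^{-1}|^{n/2}$. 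Once that identification is made explicit, the remaining ingredients ($\pi^{mn/2}$, the parameters $n/2$ and $(n+m+1)/2$, the argument $x\Sigma^{-1}$, and the derivatives $h^{(k)}(0)$) do coincide in the two regimes and your two-case reduction is complete.
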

If $h(y) =\exp(-y/2)/(2\pi)^{m n / 2}$, the function (\ref{dist-l1}) for nonsingular and singular cases coincides with the results of Sugiyama~\cite{S1967} and Shimizu and Hashiguchi~\cite{S2021a}, respectively. 
Namely, the function \eqref{dist-l1} in the Gaussian case is educed to  
\begin{align*}
\mathrm{Pr}(\ell_1<x)=\frac{\Gamma_t((t+1)/2)(\frac{x}{2})^{nm/2}}{\Gamma_t((n+m+1)/2)|\Sigma|^{n/2}}{}_{1}F_{1}\left(\frac{n}{2}; \frac{n+m+1}{2};-\frac{1}{2}\Sigma^{-1} \right), 
\end{align*}
where $t=\mathrm{min}(n, m)$. 

\section{Numerical experiments}
\label{numerical}
In this section, we discuss the numerical computations of \eqref{kum-singular;dist-1} under the matrix variable $t$ and Kotz-type models.
If the generating function $h(x)$ and its $k$-th derivative are given as 
\begin{align}
h(y)&=\frac{\Gamma((mn+\rho)/2)}{(\pi \rho)^{mn/2}\Gamma(\rho/2)}(1+y/\rho)^{-(mn+\rho)/2}, 
\text{ and }\label{eq:geneT}\\
h^{(k)}(y)&=\frac{\Gamma((mn+\rho)/2)(-1)^k((mn+\rho)/2)_k}{(\pi\rho)^{mn/2}\Gamma(\rho/2)\rho^k}(1+y/\rho)^{-((mn+\rho)/2+k)}, \label{eq:generateT}
\end{align}
respectively, then an $m\times n$ random matrix $X$ is said to have a matrix-variate $t$ distribution, 
denoted by $T_{m\times n}(\rho, \Sigma)$.  
The corresponding density function in \eqref{eq:ellip} is also given by
\begin{align*}
g_{\tiny X}(X)=\frac{\Gamma((mn+\rho)/2)}{(\pi\rho)^{mn/2}\Gamma(\rho/2)|\Sigma|^{n/2}}
(1+\mathrm{tr}(X^\top\Sigma^{-1}X)/\rho)^{-(mn+\rho)/2}. 
\end{align*}
We can determine a constant $M$ in Theorem~\ref{thm:conv-rPs} for \eqref{eq:generateT}.
\begin{corollary} \label{cor:h-T}
If the generating function $h(x)$ is in the form of \eqref{eq:geneT} and $m n \le \rho$, 
then the superiority of $| h^{k}(0)|$ for $k=0, 1, \dots$ is evaluated by $M = \pi ^{- m n/2}$ in Theorem~\ref{thm:conv-rPs}.
\end{corollary}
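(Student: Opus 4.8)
The plan is to reduce the claimed bound to a single scalar inequality between gamma functions and then to verify that inequality through the log-convexity of $\Gamma$. First I would set $y=0$ in \eqref{eq:generateT}; since the factor $(1+y/\rho)^{-((mn+\rho)/2+k)}$ equals $1$ at the origin, this yields the closed form
\begin{align*}
|h^{(k)}(0)|
=\frac{\Gamma\!\left(\frac{mn+\rho}{2}\right)\left(\frac{mn+\rho}{2}\right)_k}{(\pi\rho)^{mn/2}\,\Gamma\!\left(\frac{\rho}{2}\right)\rho^k}
=\frac{\Gamma\!\left(\frac{mn+\rho}{2}+k\right)}{(\pi\rho)^{mn/2}\,\Gamma\!\left(\frac{\rho}{2}\right)\rho^k},
\end{align*}
where the second equality uses $\Gamma(\alpha)(\alpha)_k=\Gamma(\alpha+k)$ with $\alpha=(mn+\rho)/2$. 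Substituting this into the defining inequality for $M$ in Theorem~\ref{thm:conv-rPs} and cancelling the common factor $\pi^{-mn/2}$, the assertion $|h^{(k)}(0)|\le M=\pi^{-mn/2}$ becomes equivalent to the scalar gamma inequality
\begin{align*}
\Gamma\!\left(\frac{\rho}{2}+\frac{mn}{2}+k\right)\le \Gamma\!\left(\frac{\rho}{2}\right)\rho^{\,mn/2+k},
\end{align*}
which is what I would aim to establish for every $k=0,1,\dots$ under the hypothesis $mn\le\rho$.

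For the base term I would write the gamma ratio as a telescoping product and bound it factor by factor. When $mn$ is even, $\Gamma(\rho/2+mn/2)/\Gamma(\rho/2)=\prod_{j=0}^{mn/2-1}(\rho/2+j)$, and since $mn\le\rho$ forces $j\le mn/2-1\le\rho/2-1$, every factor satisfies $\rho/2+j<\rho$; hence the product is strictly smaller than $\rho^{mn/2}$, giving $h(0)=|h^{(0)}(0)|\le\pi^{-mn/2}$. A possible half-integer contribution (when $mn$ is odd) I would absorb with the log-convexity bound $\Gamma(x+s)/\Gamma(x)\le x^{s}$, valid for $0\le s\le1$, applied with $s=1/2$ and $x=\rho/2+(mn-1)/2<\rho$. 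This disposes of the $k=0$ case cleanly and is the point at which the condition $mn\le\rho$ genuinely enters.

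The main obstacle I anticipate is extending the bound uniformly to all $k$. From the closed form above, the ratio of successive terms is
\begin{align*}
\frac{|h^{(k+1)}(0)|}{|h^{(k)}(0)|}=\frac{\frac{mn+\rho}{2}+k}{\rho}=\frac{mn+\rho+2k}{2\rho},
\end{align*}
which at $k=0$ equals $(mn+\rho)/(2\rho)\le1$ exactly under $mn\le\rho$, but \emph{increases} with $k$. Thus the monotone regime $k\le(\rho-mn)/2$, where the successive ratio stays at most $1$ and no term can exceed $h(0)$, is immediate; the delicate part is the complementary range, where the later factors $\rho/2+j$ are no longer individually below $\rho$ and the factorwise bound of the base case breaks down. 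Pinning down the interplay between the growth of the rising factorial $(\frac{mn+\rho}{2})_k$ and the hypothesis $mn\le\rho$ in this tail is the step I would scrutinise most carefully, since it is precisely here that the uniform bound $M=\pi^{-mn/2}$ must be justified rather than merely checked near $k=0$.
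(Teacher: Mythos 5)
Your closed form for $|h^{(k)}(0)|$ and the reduction to the gamma inequality are correct, and the obstacle you flag in your final paragraph is not a technicality you failed to overcome --- it is fatal. As you computed, the successive ratio is $|h^{(k+1)}(0)|/|h^{(k)}(0)| = \bigl(\tfrac{mn+\rho}{2}+k\bigr)/\rho$, which exceeds $1$ for every $k > (\rho-mn)/2$ and grows without bound; hence $\bigl(\tfrac{mn+\rho}{2}\bigr)_k/\rho^k \to \infty$ (a rising factorial grows factorially while $\rho^k$ grows only geometrically), so $\sup_k |h^{(k)}(0)| = \infty$ and no finite $M$ --- in particular not $\pi^{-mn/2}$ --- can satisfy the hypothesis of Theorem~\ref{thm:conv-rPs}. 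The ``delicate tail'' you promise to scrutinise cannot be controlled, because the claim is false there. The only parts that survive are exactly what you proved: the bound at $k=0$ and its persistence on the monotone-decreasing range $k \le (\rho-mn)/2$.

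For comparison, the paper's own proof takes a different and equally unsound route: it sets $a=(mn+\rho)/2$, factors $|h^{(k)}(0)|$ as $\frac{\Gamma(a+k)}{\Gamma(a)a^k}\cdot\frac{\Gamma(a)a^k}{\Gamma(\rho/2)\rho^k}\cdot(\pi\rho)^{-mn/2}$, and asserts $\lim_{k\to\infty}\Gamma(a+k)/(\Gamma(a)a^k)=1$ ``from Stirling's formula.'' That limit is false for the same reason your tail blows up ($\Gamma(a+k)/\Gamma(a)=(a)_k$ dominates $a^k$), and even if the sequence did converge to $\pi^{-mn/2}$, a limit would not bound the supremum over all $k$. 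So your analysis, carried out honestly, has in fact located the error in the corollary rather than a gap in your own argument; the most one can salvage is a $k$-dependent bound or a restriction to $k\le(\rho-mn)/2$, neither of which yields the uniform constant $M$ that Theorem~\ref{thm:conv-rPs} requires.
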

\begin{proof}
Let $a = (mn + \rho)/2$ and the sequence $\{ h^{k}(0) \mid k=0, 1, \dots\}$ monotonically increase for $k$. 
From Staring's formula, it is clear that
\begin{align*}
 \lim_{k \to \infty} \dfrac{\Gamma(a + k)}{\Gamma(a) a^k} = 1.
\end{align*}
Therefore, if $m n \le \rho$, then we have $a \le \rho$ and 
\begin{align*}
| h^{(k)}(0) | &=\frac{\Gamma((mn+\rho)/2)((mn+\rho)/2)_k}{(\pi\rho)^{mn/2}\Gamma(\rho/2)\rho^k}
= \dfrac{\Gamma(a + k)}{\Gamma(a) a^k} \dfrac{\Gamma(a) a^k}{\Gamma(\rho/2) \rho^k} \frac{1}{(\pi \rho)^{mn/2} }\\
&<\dfrac{\Gamma(a + k)}{\Gamma(a) a^k} 
\dfrac{\Gamma(\rho/2)(\rho/2)(\rho/2+1)\cdots (\rho/2+[mn/2])}{\Gamma(\rho/2) \rho^{[mn/2]}}
 \frac{1}{\pi ^{mn/2} }
\\
&\to  \frac{1}{\pi^{mn/2} } \text{ as } k \to \infty,
\end{align*}
where $[x]$ is the Gauss symbol of $x$.
Hence, we can take $M = \pi^{ - mn/2}$ in Theorem~\ref{thm:conv-rPs}.
\end{proof}
Let $X\sim T_{m\times n}(\rho, \Sigma)$, where $m>n$ and $\rho\ge mn$. 
From \eqref{kum-singular;dist-1}, the truncated distribution up to the $K$ th degree of $\ell_1$ of $W=XX^\top$  is given by
\begin{align*}
F_K(x)&=\frac{\Gamma_n((n+1)/2)\Gamma((mn+\rho)/2)}{\Gamma_n((m+n+1)/2)\Gamma(\rho/2)}
|x\Sigma^{-1}|^{n/2} \sum_{k=0}^K\frac{((mn+\rho)/2)_k\rho^{\rho/2}}{(\rho+x \mathrm{tr}\Sigma^{-1})^{(mn+\rho)/2+k}}
\sum_{\kappa\in P^k_m}\frac{((m+1)/2)_\kappa}{((m+n+1)/2)_\kappa}\frac{\mathcal{C}_\kappa(x\Sigma^{-1})}{k!}. 
\end{align*}
We use the algorithm of Hashiguchi et al.~\cite{H2000} for the calculation of zonal polynomials in the above function.
The empirical distribution based on $10^6$ Monte Carlo simulations is represented by $F_{\mathrm{sim}}$. 
The generation of $X\sim T_{m\times n}(\rho, \Sigma)$ is performed according to Theorem~3 of Shinozaki et al.~\cite{S2018}.
Table \ref{table:t_m} indicates several percentile points of correlated and uncorrelated cases. 
We see that $F_K$ has at least two-decimal-place precision. 

\begin{table}[H]
\caption{Percentile points of truncated distribution~($m=3, n=2, \rho=7$)} \label{table:t_m}\begin{center}
\begin{tabular}{c}
    \begin{minipage}[c]{0.4\hsize}
      \begin{center}
       \captionsetup{labelformat=empty,labelsep=none}
         \subcaption{$\Sigma=\mathrm{diag}(1, 1, 1)$}
{\begin{tabular}{@{}cccc@{}} \toprule
$\alpha$&${{F^{-1}_\mathrm{sim}}}(\alpha)$ &$F^{-1}_{100}(\alpha)$  \\
0.05	 &~1.15& 	1.15\\
0.10	 &~1.61& 	1.61\\
0.50	 &~4.87& 	4.87\\
0.90	 &~14.2& 	14.2\\ 
0.95	 &~19.5& 	19.5\\
\noalign{\smallskip}\hline
\end{tabular}}
 \end{center}
  \end{minipage}
  \begin{minipage}[c]{0.4\hsize}
          \begin{center}
        \captionsetup{labelformat=empty,labelsep=none}
           \subcaption{$\Sigma=\mathrm{diag}(3, 2, 1)$}
{\begin{tabular}{@{}cccc@{}} \toprule
$\alpha$&${F_\mathrm{sim}^{-1}}(\alpha)$ &$F_{140}^{-1}(\alpha)$ \\
0.05	 &~2.15& 	2.16\\
0.10	 &~3.05& 	3.06\\
0.50	 &~9.65& 	9.65\\
0.90	 &~29.5& 	29.5\\ 
0.95	 &~41.0& 	41.1\\ 
\noalign{\smallskip}\hline
\end{tabular}}
        \end{center}
    \end{minipage}
  \end {tabular}
  \end{center}
\end{table}

Next, we  illustrate the computation of \eqref{kum-singular;dist-1} with the Kotz-type model.
Caro-Lopera~\cite{C2008} classified the Kotz-type distribution into three subfamilies: Kotz types I, II, and III. 
The generator function for the Kotz type I distribution is given by
\begin{align}
\label{eq:hkotzdist}
h(x)=\frac{\theta^{\frac{2q +mn-2}{2}}\Gamma\left(\frac{mn}{2} \right)}{\pi^{mn/2}
\Gamma\left(\frac{2q+mn-2}{2}\right)}x^{q-1}\exp(-\theta x), 
\end{align}
where $\theta>0$ and $2q+mn>2$. 
\begin{corollary} \label{cor:h-Kotz}
If the generating function $h(x)$ is in the form of \eqref{eq:hkotzdist} with $|\theta|< 1$, 
then the superiority of $| h^{k}(0)|$ for $k=0, 1, \dots$ is evaluated by $M = \pi^{- m n/2}$ in Theorem~\ref{thm:conv-rPs}.
\end{corollary}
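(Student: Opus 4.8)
The plan is to imitate the proof of Corollary~\ref{cor:h-T}: read off the Maclaurin coefficients $h^{(k)}(0)$ directly from \eqref{eq:hkotzdist}, rewrite $|h^{(k)}(0)|$ as a ratio of gamma functions multiplied by a power of $\theta$, and then use Stirling's formula together with the hypothesis $|\theta|<1$ to bound the sequence and exhibit the majorant $M=\pi^{-mn/2}$ demanded by Theorem~\ref{thm:conv-rPs}. Since that theorem requires $h\in C^\infty$, I would take $q$ to be a positive integer, so that $x^{q-1}$ is smooth at the origin and the Maclaurin expansion is legitimate.

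First I would compute the coefficients. Writing the generator as $h(x)=C\,x^{q-1}e^{-\theta x}$ with $C=\theta^{(2q+mn-2)/2}\Gamma(mn/2)/\{\pi^{mn/2}\Gamma((2q+mn-2)/2)\}$ and substituting $e^{-\theta x}=\sum_{j\ge 0}(-\theta)^j x^j/j!$, the coefficient of $x^k$ yields $h^{(k)}(0)=C\,k!\,(-\theta)^{k-q+1}/(k-q+1)!$ for $k\ge q-1$ (and $h^{(k)}(0)=0$ for $k<q-1$). Collecting the powers of $\theta$, this gives
\[
|h^{(k)}(0)|=\frac{\Gamma(mn/2)}{\pi^{mn/2}\,\Gamma((2q+mn-2)/2)}\,\frac{\Gamma(k+1)}{\Gamma(k-q+2)}\,\theta^{\,k+mn/2},
\]
which is the Kotz-type analogue of the expression treated in Corollary~\ref{cor:h-T}.

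Next I would examine the large-$k$ behaviour. Setting $x=k-q+2$ and using $\Gamma(x+q-1)/\Gamma(x)\sim x^{q-1}$, Stirling's formula gives $\Gamma(k+1)/\Gamma(k-q+2)\sim k^{q-1}$; since $|\theta|<1$, the factor $\theta^{k}$ dominates and $k^{q-1}\theta^{k}\to 0$, so $|h^{(k)}(0)|\to 0$ as $k\to\infty$. In particular the sequence $\{|h^{(k)}(0)|\}$ is bounded, so a finite $M$ as in Theorem~\ref{thm:conv-rPs} certainly exists; it remains to locate the supremum and verify that it does not exceed $\pi^{-mn/2}$.

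That last uniform estimate is the main obstacle. In contrast with Corollary~\ref{cor:h-T}, where the relevant sequence increases monotonically to its limit, here the sequence may rise before decaying, so the supremum is attained at a finite index and must be bounded by hand. I would control it by the telescoping argument of Corollary~\ref{cor:h-T}, matching each of the $q-1$ ascending factors of $\Gamma(k+1)/\Gamma(k-q+2)$ against the decaying powers of $\theta<1$ and the fixed factor $\Gamma(mn/2)/\Gamma((2q+mn-2)/2)$, so that whatever multiplies $\pi^{-mn/2}$ stays at most $1$; the degenerate case $q=1$ is immediate, since then $|h^{(k)}(0)|=\theta^{\,k+mn/2}\pi^{-mn/2}<\pi^{-mn/2}$. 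Making this bound hold for \emph{every} $k$, rather than only asymptotically, is the delicate point where the interplay of $\theta$, $q$, and $mn$ must be handled with care, and I would expect the claimed constant to be the crux of the argument.
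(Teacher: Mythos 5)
Your coefficient computation and overall strategy coincide with the paper's. The paper applies the Leibniz rule to \eqref{eq:hkotzdist} to get $|h^{(k)}(0)|=\frac{\theta^{(2q+mn-2)/2}\Gamma(mn/2)}{\pi^{mn/2}\Gamma((2q+mn-2)/2)}\,{}_kC_{q-1}\,\theta^{k-q+1}$, which, after collecting the powers of $\theta$, is your expression up to a factor of $(q-1)!$ (your $k!/(k-q+1)!$ is in fact the correct combinatorial factor; both are bounded by $k^{q-1}$, which is all that gets used). The paper then invokes $\Gamma(mn/2)/\Gamma((2q+mn-2)/2)<1$ and ${}_kC_{q-1}\le k^{q-1}$ to obtain $|h^{(k)}(0)|\le \theta^{mn/2+k}k^{q-1}/\pi^{mn/2}$ and finishes by letting $k\to\infty$. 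So the published proof is, like yours, an asymptotic argument: it does not verify the bound for every $k$ either, and its displayed limit is actually $0$, not $\pi^{-mn/2}$.

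The step you defer --- forcing $\sup_k|h^{(k)}(0)|\le\pi^{-mn/2}$ --- is therefore a genuine gap in your write-up, and the telescoping you sketch cannot close it, because the pointwise inequality is false for admissible parameters. The quantity $\frac{k!/(k-q+1)!}{\Gamma(q-1+mn/2)/\Gamma(mn/2)}=\prod_{i=0}^{q-2}\frac{k-i}{mn/2+i}$ grows like $k^{q-1}$, and $k^{q-1}\theta^{k}$ can greatly exceed $1$ at moderate $k$ when $\theta$ is near $1$ and $q$ is not small: for $mn=2$, $q=5$, $\theta=0.9$ one finds $|h^{(10)}(0)|=\frac{10!/6!}{24\pi}\,(0.9)^{11}\approx 21$, far above $\pi^{-1}\approx 0.32$. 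What your argument (and the paper's) actually establishes is that $|h^{(k)}(0)|\to 0$, hence that \emph{some} finite majorant $M$ exists --- which is all Theorem~\ref{thm:conv-rPs} needs for convergence of the ${}_rP_s$ series --- but the specific constant $M=\pi^{-mn/2}$ is not justified by either route. You were right to identify this as the crux; the honest fix is to claim only boundedness (or an explicit $M$ depending on $\theta$, $q$, $mn$) rather than the stated constant.
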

\begin{proof}
Applying the Leibniz rule to \eqref{eq:hkotzdist}, we can evaluate the  superiority of $| h^{k}(0)|$ for $k=0, 1, \dots$ as
\begin{align*}
|h^{(k)}(0)| &=\frac{\theta^{2q+mn-2/2}\Gamma(m n/2)}{\pi^{m n/2}\Gamma((2q+mn-2)/2)}{_kC_{q-1}}\theta^{k-q+1} \le \frac{\theta^{mn/2+k}k^{q-1}}{\pi^{m n/2}}
\\
&\to  \frac{1}{\pi ^{mn/2} } \text{ as } k \to \infty.
\end{align*}
because $\Gamma(mn/2)/\Gamma((2q+mn-2)/2) < 1$ and  ${_kC_{q-1}}\le k^{q-1}$ for $k=0,1, \dots$. 
Hence, we can take $M =\pi^{- m n/2}$ in Theorem~\ref{thm:conv-rPs}.
\end{proof}
The $k$-th derivatives for the Kotz type II and III distributions can also be obtained by Fa\`a di Bruno's formula found in Caro-Lopera~\cite{C2008, C2010}.
If we set $\theta=1/2$ and $q=2$ in \eqref{eq:hkotzdist}, the distribution \eqref{kum-singular;dist-1} is reduced to
\begin{align}
\mathrm{Pr}(\ell_1<x)
&=\frac{\Gamma_n((n+1)/2)\Gamma(mn/2)}{\Gamma_n((n+m+1)/2)\Gamma(mn/2+1)}
\left(\frac{1}{2}\right)^{mn/2+1}|x\Sigma^{-1}|^{n/2}\mathrm{etr}(-x\Sigma^{-1}/2)\nonumber\\
&\times \sum_{k=0}^\infty\left(\frac{x}{2}\right)^k(\mathrm{tr} x\Sigma^{-1}-2k)
\sum_{\kappa\in P_m^k}\frac{((m+1)/2)_\kappa}{((m+n+1)/2)_\kappa}\frac{\mathrm{C}_\kappa(\Sigma^{-1})}{k!}. \label{eq:distKsig}
\end{align}
The generation of random numbers for the matrix-variate Kotz type I distribution with parameters $\theta=1/2$ and $q=2$ is based on Definition~5 and Theorem~7 of Kollo and Roos~\cite{K2005}.
Fig \ref{fig:dist-k} shows the comparison of the truncated distribution up to the $70$ th degree for \eqref{eq:distKsig} and $F_{\mathrm{sim}}$ for the parameters $n=2$ and $\Sigma=\mathrm{diag}(3, 2, 1)$. 
We observe that the truncated distribution is very close to the empirical distribution $F_\mathrm{sim}$.
The $95$ percentage points of both their distribution are $29.0$.
 \begin{figure}[H]
\begin{center}
\includegraphics[width=8cm]{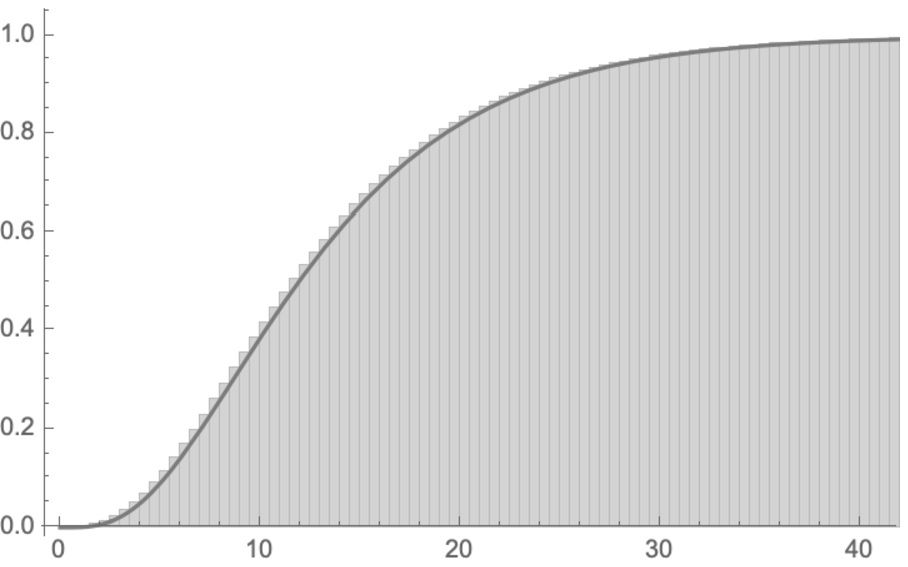}
\rlap{\raisebox{5.2cm}{\kern-8.1cm{\small $F_{70}(x)$}}}
\rlap{\raisebox{.3cm}{\kern0.0cm{\small $x$}}}
\caption{$\Sigma=\mathrm{diag}(3,2,1)$, $n=2$} \label{fig:dist-k}
\end{center}
\end{figure}





\end{document}